\DeclareMathOperator{\Aut}{Aut}
\DeclareMathOperator{\Ann}{Ann}
\newtheorem{theorem}{Theorem}
\newtheorem{lemma}{Lemma}
\newtheorem{proposition}{Proposition}
\newtheorem{definition}{Definition}
\newtheorem{corollary}{Corollary}
\newtheorem{note}{Remark}
\newtheorem{example}{Example}
\begin{document}
\sloppy
\title[On Lie algebras associated with modules over polynomial rings]
{On Lie algebras associated with modules\\ over polynomial rings}
\author
{A.P. Petravchuk,  K.Ya. Sysak}
\address{A.P. Petravchuk:
Department of Algebra and Mathematical Logic, Faculty of Mechanics and Mathematics,
Taras Shevchenko  National University of Kyiv, 64, Volodymyrska street, 01033  Kyiv, Ukraine}
\email{aptr@univ.kiev.ua , apetrav@gmail.com}
\address{K.Ya. Sysak:
Department of Algebra and Mathematical Logic, Faculty of Mechanics and Mathematics,
Taras Shevchenko National University of Kyiv, 64, Volodymyrska street, 01033  Kyiv, Ukraine}
\email{sysakkya@gmail.com}
\date{\today}
\keywords{Lie algebra, polynomial ring,  metabelian algebra, module, weak isomorphism, linear operator}
\subjclass[2000]{Primary 17B30, 17B60; Secondary 13C13}

\begin{abstract}
Let $\mathbb K$ be an algebraically closed field of characteristic zero. Let $V$ be a module over the polynomial ring  $\mathbb K[x,y]$. The actions of $x$ and $y$ determine linear operators $P$ and $Q$ on $V$  as a vector space over $\mathbb K$. Define the  Lie algebra $L_V=\mathbb K\langle P,Q\rangle \rightthreetimes V$ as the semidirect product of two abelian Lie algebras with the natural action of $\mathbb K\langle P,Q\rangle$ on $V$. We show that if $\mathbb K[x,y]$-modules $V$ and $W$ are isomorphic or weakly isomorphic, then the corresponding associated Lie algebras $L_V$ and $L_W$ are isomorphic. The converse is not true: we construct two $\mathbb K[x,y]$-modules $V$ and $W$  of dimension $4$ that are not weakly isomorphic but their   associated Lie algebras  are isomorphic. We characterize such pairs of $\mathbb K[x, y]$-modules of arbitrary dimension.  We prove that indecomposable modules  $V$ and $W$  with $\dim V=\dim W\geq 7$ are weakly isomorphic if and only if their associated Lie algebras  $L_V$ and $L_W$ are isomorphic.
\end{abstract}

\maketitle

\section{Introduction}

Let $\mathbb K$ be an algebraically closed field of characteristic zero. Let $V$ be a module over the polynomial ring $\mathbb K[x,y]$. Define the commuting  linear operators $P$ and $Q$ on $V$ by setting $P(v)=x\cdot v$ and $Q(v)=y\cdot v$ for all $v\in V$. Conversely, if $P$ and $Q$ are commuting linear operators on  $V$, then the  vector space $V$ can be considered as the $\mathbb K[x, y]$-module with multiplication $f(x,y)\cdot v=f(P,Q)(v)$ for all $v\in V$ and $f(x,y)\in \mathbb K[x,y]$.

For each $\mathbb K[x,y]$-module $V$, let us construct the metabelian Lie algebra $L_V=\mathbb K\langle P,Q\rangle \rightthreetimes V$, which is the external semidirect product of the abelian Lie algebra $\mathbb K\langle P,Q\rangle$ of dimension $2$ and the abelian Lie algebra $V$  with the natural action of $\mathbb K\langle P,Q\rangle$  on $V$. We say that the Lie algebra $L_V$ \emph{is  associated with} the $\mathbb K[x, y]$-module $V$.

Modules over polynomial rings were studied by many authors: Gelfand and Ponomarev \cite{GP} proved that the problem of classifying finite dimensional modules over $\mathbb K[x, y]$ contains the problem of classifying matrix pairs up to similarity. Quillen \cite{Qui} and Suslin \cite{Sus} studied projective modules over polynomial rings in connection with Serre's problem.

Our goal is to study relations between   finite dimensional  $\mathbb K[x,y]$-modules $V$  and  the corresponding associated Lie algebras $L_V.$

For each automorphism  $\theta $ of the abelian Lie algebra $\mathbb K\langle P,Q\rangle $ with
$$\theta (P)=\alpha _{11}P+\alpha _{12}Q,  \qquad  \theta (Q)=\alpha _{21}P+\alpha _{22}Q, \alpha _{ij}\in \mathbb K ,$$
the semidirect product $\mathbb K\langle \theta (P), \theta (Q)\rangle \rightthreetimes V$ is isomorphic to $L_V.$ The corresponding transformation of the ring $\mathbb K[x, y]$ is the automorphism of $\mathbb K[x, y]$ defined by  $\theta (x)=\alpha _{11}x+\alpha _{12}y $ and $ \theta (y)=\alpha _{21}x+\alpha _{22}y. $ This automorphism defines the ``twisted''  module $V_{\theta}$ with the following multiplication on the vector space  $V:$
$$x\circ v=\theta(x)\cdot v \quad\text{and}\quad y\circ v=\theta(y)\cdot v\qquad \mbox{for all} \ v\in V.$$  The modules $V$ and $V_{\theta}$ are called \emph{weakly isomorphic};  this notion in matrix form was studied by Belitskii, Lipyanski, and Sergeichuk \cite{BLS}. We show in Proposition \ref{propos} that
if  $\mathbb K[x, y]$-modules $W$ and $ V $ are isomorphic  (or even weakly isomorphic), then the associated Lie algebras $L_V$ and $L_W$ are isomorphic. The converse statement is  not true: Lemma \ref{example} gives an example of $\mathbb K[x, y]$-modules  $V$ and $W$ that are not weakly isomorphic, but their Lie algebras $L_V$ and $L_W$ are isomorphic. Nevertheless, if $V$ and $W$ are indecomposable modules with $\dim _{\mathbb K}V=\dim _{\mathbb K}W\geq 7,$ then by Theorem \ref{indecomposable} $V$ and $W$ are weakly isomorphic if and only if the associated Lie algebras  $L_V$ and $L_W$ are isomorphic.

Theorem \ref{structure} gives a characterization of  pairs  $(V,W)$ of $\mathbb K[x, y]$-modules that are not weakly isomorphic but their  associated Lie algebras  are isomorphic. It shows that the problem of classifying finite dimensional  Lie algebras of the form $L=B\rightthreetimes A$  with an abelian ideal $A$ and a two-dimensional abelian subalgebra $B$ is equivalent to the problem of classifying finite dimensional $\mathbb K[x, y]$-modules  up to weak isomorphism. We think that the latter problem is wild. The wildness of some classes of metabelian Lie algebras was established by Belitskii, Bondarenko, Lipyanski, Plachotnik, and Sergeichuk  \cite{BBLPS,BLS}.

From now on, $\mathbb K$  is an algebraically closed field of characteristic zero. All Lie algebras and modules over $\mathbb K[x, y]$ that we consider are finite dimensional over $\mathbb K$.

Let  $L_1$ and $L_2$ be Lie algebras over a field $\mathbb K $. Let $\varphi : L_1 \to {\rm Der}{L_2}$ be a homomorphism of Lie algebras, in which ${\rm Der}{L_2}$ is the Lie algebra of all $\mathbb K$-derivations of $L_2.$ The \emph{external semidirect product} of $L_1$ and $L_2$ (denoted by $L_1\rightthreetimes _{\varphi}L_2$ or $L_1\rightthreetimes L_2$) is the vector space $L_1\oplus L_2$ with the  Lie bracket $$[(a_1, b_1) , (a_2, b_2)]=([a_1, a_2], D_1(b_2)-D_2(b_1)),$$ where $D_1:=\varphi (a_1)$ and $D_2:=\varphi (a_2)$.

If a Lie algebra $L$ contains an ideal $N$ and a subalgebra $B$ such that $L=N+B$ and $ N\cap B=0,$  then $L$ is the \emph{internal semidirect product}  of  Lie algebras $B$ and $N$ (we write $L=B\rightthreetimes N$) with the natural homomorphism of the Lie algebra $B$ into the Lie algebra ${\rm Der}_{\mathbb K}N.$

Let $W$ be a subspace  of a vector space $V$ over $K.$ Elements  $v_1, \ldots , v_n\in V$ are called \emph{linearly independent over $W$} if  $v_1+W, \ldots , v_n+W$ are linearly independent in the quotient space $V/W.$ We write $V\simeq W$ if $V$ and $W$ are isomorphic  $\mathbb K[x, y]$-modules.

\section{ $\mathbb K[x, y]$-modules with isomorphic associated Lie algebras}

Belitskii, Lipyanski, and Sergeichuk \cite{BLS} considered a notion that is  weaker than similarity of  matrix pairs.  Their notion in the case of commuting matrices can be formulated in terms of $\mathbb K[x, y]$-modules as follows.
Let $\theta$ be a linear  automorphism of the polynomial ring $\mathbb K[x,y]$, defined by linear homogeneous polynomials. Let $V_{\theta}$ be the $\mathbb K[x, y]$-module defined above.

\begin{definition}  $\mathbb K[x,y]$-modules $V$ and $W$ are called \emph{weakly isomorphic} if there exists a series of $\mathbb K[x,y]$-modules $V_1:=V, V_2,\dots, V_k:=W$ such that for each $i=1,2,\dots,k$ either $V_i\simeq V_{i+1}$, or $V_{i+1}=(V_i)_{\theta_i}$ for some linear automorphism ${\theta_i}\in \Aut(\mathbb K[x,y])$.
\end{definition}

Each weak isomorphism  is an equivalence relation on the class  of all $\mathbb K[x,y]$-modules. Clearly, isomorphic $\mathbb K[x,y]$-modules are weakly isomorphic. The converse is false by the following example.

\begin{example}\label{nonisom}
Let $V$ be an $n$-dimensional  vector space over a field $\mathbb K$. Choose a basis of $V$ over $\mathbb K$ and take any nonzero $n\times n$ matrix $A$ with entries in $\mathbb K.$  Denote by $V$ the $\mathbb K[x, y]$-module with the underlying vector space $V$ and the action of $x$ and $y$ on $V$ determined by the matrix pair  $(A,0_n).$ Let $\theta \in \Aut(\mathbb K[x,y])$ be the automorphism  such that $\theta(x)=y$ and $\theta(y)=x$. Define the action of $\mathbb K[x, y]$ on $V_{\theta}$ by the matrix  pair $(0_n, A).$   Then the modules  $V$ and $V_{\theta}$ are not isomorphic since the pairs $(A,0_n)$ and $(0_n,A)$ are not similar.
\end{example}

\begin{note}
$\mathbb K[x,y]$-modules $V$ and $W$ are weakly isomorphic if and only if there exists a $\mathbb K[x,y]$-module $U$ such that $U\simeq V$ and $W=U_{\theta}$ for some linear automorphism $\theta$ of the ring $\mathbb K[x,y]$.
\end{note}

\begin{proposition}\label{propos}
If $V$ and $W$ are weakly isomorphic modules over the ring $\mathbb K[x,y]$, then the corresponding associated Lie algebras $L_V$ and $L_W$ are isomorphic.
\end{proposition}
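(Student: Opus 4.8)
The plan is to reduce the claim to two elementary cases and then compose isomorphisms. By the Remark preceding the statement, any two weakly isomorphic modules $V$ and $W$ can be joined through a single intermediate module $U$ with $U\simeq V$ and $W=U_{\theta}$ for some linear $\theta\in\Aut(\mathbb K[x,y])$ (equivalently, it suffices to treat one step of the defining chain at a time). So I would prove separately that (i) isomorphic modules and (ii) $\theta$-twisted modules yield isomorphic associated Lie algebras. Once $L_V\cong L_U$ and $L_U\cong L_{U_{\theta}}=L_W$ are established, transitivity of isomorphism gives $L_V\cong L_W$.

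For case (i), let $\phi\colon V\to W$ be a module isomorphism, and write $P_V,Q_V$ and $P_W,Q_W$ for multiplication by $x$ and $y$ on $V$ and $W$. That $\phi$ is a module map is precisely the intertwining relations $\phi P_V=P_W\phi$ and $\phi Q_V=Q_W\phi$. I would define $\Phi\colon L_V\to L_W$ to be the identity on the formal generators $P,Q$ of the two-dimensional abelian subalgebra and to be $\phi$ on the ideal $V$; this is a linear isomorphism since $\phi$ is. Because both subalgebras are abelian, the bracket reduces to $[(D_1,v_1),(D_2,v_2)]=(0,D_1v_2-D_2v_1)$, so checking that $\Phi$ preserves brackets amounts to verifying $\phi(P_Vv)=P_W\phi(v)$ and $\phi(Q_Vv)=Q_W\phi(v)$, which are exactly the relations above.

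For case (ii), the underlying vector space of $U_{\theta}$ coincides with that of $U$, while multiplication by $x$ and $y$ is now carried out by $P'=\alpha_{11}P+\alpha_{12}Q$ and $Q'=\alpha_{21}P+\alpha_{22}Q$, with $(\alpha_{ij})$ the invertible matrix defining $\theta$. Thus passing from $U$ to $U_{\theta}$ amounts to precomposing the action homomorphism $\varphi$ of the two-dimensional abelian subalgebra with the invertible linear map $A=(\alpha_{ij})$ on the generators, leaving the ideal $U$ untouched. I would therefore define $\Psi\colon L_{U_{\theta}}\to L_U$ to be $A$ on the subalgebra and the identity on $U$. Invertibility of $A$ (equivalently, the automorphism property of $\theta$) makes $\Psi$ bijective, and the bracket computation is immediate since the only relevant term, $\varphi_{\theta}(a)u=\varphi(Aa)u$, is matched on the nose.

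The verifications themselves are short once the semidirect-product formalism is in place; the one point requiring care is case (ii). Here one must remember that $\mathbb K\langle P,Q\rangle$ is a \emph{formal} two-dimensional abelian Lie algebra, whose image in the algebra of linear operators on $U$ may have smaller dimension. The twist must be implemented as the invertible map $A$ acting on this abstract space of generators, not on their possibly degenerate images, so that $\Psi$ remains a bijection. With this understood, combining cases (i) and (ii) along the defining chain of a weak isomorphism yields $L_V\cong L_W$.
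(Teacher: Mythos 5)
Your proposal is correct and follows essentially the same route as the paper: reduce via the Remark to a single intermediate module $U$ with $U\simeq V$ and $W=U_{\theta}$, extend a module isomorphism by the identity on the generators $P,Q$, and absorb the twist using the invertible matrix $[\alpha_{ij}]$. The only difference is cosmetic --- where the paper identifies $\mathbb K\langle \theta(T_1),\theta(T_2)\rangle$ with $\mathbb K\langle T_1,T_2\rangle$ and concludes that $L_U$ and $L_{U_{\theta}}$ are the same Lie algebra, you realize the same matrix as an explicit isomorphism on the formal two-dimensional subalgebra, which is if anything slightly more careful in the degenerate case where the operators are linearly dependent.
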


\begin{proof}
An isomorphism $\varphi$ of $\mathbb K[x,y]$-modules $V$ and $W$ can be extend  to the isomorphism $\overline{\varphi}$ of the Lie algebras $L_V=\mathbb K\langle P,Q\rangle \rightthreetimes V$ and $L_W=\mathbb K\langle S,T\rangle \rightthreetimes W$ via $\overline{\varphi}(P)=S$ and $  \overline{\varphi}(Q)=T $, and further by linearity. Assume that $V$ and $W$ are weakly isomorphic but not isomorphic. Then there exists a $\mathbb K[x,y]$-module $U$ such that $U\simeq V$ and $U_{\theta}=W$ for some linear automorphism $\theta\in \Aut(\mathbb K[x,y])$ given by a nonsingular $2\times 2$ matrix $[a_{ij}]$ over  $\mathbb K$:
$$\theta  (x)=\alpha _{11}x+\alpha _{12}y,\qquad \theta (y)=\alpha _{21}x+\alpha _{22}y. $$
  It suffices to show that $L_U$ and $L_{U_{\theta}}$ are isomorphic Lie algebras.
Write $L_U=\mathbb K\langle T_1,T_2\rangle \rightthreetimes U$, where $T_1, T_2:U\to U$ are commuting linear operators that determine the actions of $x$ and $y$ on $U$. Then $L_{U_{\theta}}= \mathbb K\langle \theta(T_1),\theta(T_2)\rangle \rightthreetimes U$, where $\theta(T_1)=\alpha_{11}T_1+\alpha_{12}T_2$ and $\theta(T_2)=\alpha_{21}T_1+\alpha_{22}T_2$. It is easy to see that $\mathbb K\langle \theta(T_1),\theta(T_2)\rangle=\mathbb K\langle T_1, T_2 \rangle$. Thus,  $L_U$ and  $L_{U_{\theta}}$ is the same Lie algebra.
\end{proof}

\begin{note}\label{remark}\label{restriction}
Let $L_V=\mathbb K\langle P,Q \rangle \rightthreetimes V$ and $L_W=\mathbb K\langle S,T \rangle \rightthreetimes W$ be the Lie algebras associated with $\mathbb K[x,y]$-modules $V$ and $W$. If $\varphi$ is an isomorphism of Lie algebras $L_V$ and $L_W$ such that $\varphi(V)=W$, then the restriction of  $\varphi$  on $V$ is a weak isomorphism of modules $V$ and $W$.
\end{note}

Let us prove Remark \ref{remark}.
Let  $\varphi(P)=\alpha_1S+\alpha_2T+w_1$ and $ \varphi(Q)=\beta_1S+\beta_2T+w_2$, where $\alpha_i, \beta_j  \in \mathbb K $ and $ w_1, w_2\in W.$ Consider the weakly isomorphic module $W_{\theta}$ with the automorphism $\theta$ defined by $\theta (x)=\alpha _{1}x+\alpha _{2}y$ and  $\theta (y)=\beta _{1}x+\beta _{2}y$. We can assume that $\varphi(P)=S+w_1$ and $\varphi(Q)=T+w_2.$ Then the restriction $\overline{\varphi}$ of $\varphi$ on $V$ is an isomorphism of $\mathbb K[x, y]$-modules $V$ and $W_{\theta}.$

\begin{lemma}\label{dimensions}
Let $V$ and $W$ be $\mathbb K[x, y]$-modules that are finite dimensional over $\mathbb K$. Let the associated Lie algebras $L_V=\mathbb K\langle P,Q \rangle \rightthreetimes V$ and $L_W=\mathbb K\langle S,T \rangle \rightthreetimes W$ be isomorphic. Let $\varphi :L_V\to L_W$ be an isomorphism of Lie algebras. Write  $W_1:=\varphi (V)\cap W$ and $V_1:=\varphi ^{-1}(W_1).$ Then $V_1$ is a submodule of $V$ and $W_1$ is a submodule of $W.$ Moreover,
\begin{itemize}
  \item[\rm(i)]
if $\varphi (V)+W=L_W,$ then $\dim _{\mathbb K}V/V_1=\dim _{\mathbb K}W/W_1=2;$
  \item[\rm(ii)] if $\dim _{\mathbb K}\varphi (V)+W/W=1,$ then $\dim _{\mathbb K}V/V_1=\dim _{\mathbb K}W/W_1=1.$
\end{itemize}
\end{lemma}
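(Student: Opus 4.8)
The plan is to translate the statement into the language of ideals of $L_V$ and $L_W$ and then finish with a short dimension count. The starting observation is the following dictionary: a $\mathbb K$-subspace $M\subseteq V$ is a $\mathbb K[x,y]$-submodule if and only if it is an ideal of $L_V$ contained in $V$. Indeed, since $V$ is abelian, the only possibly nonzero brackets of $M$ against $L_V$ are $[P,m]=P(m)$ and $[Q,m]=Q(m)$, and closure of $M$ under the operators $P$ and $Q$ is exactly closure under multiplication by $x$ and $y$. The same equivalence holds for $W$ and $L_W$.

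With this in hand, I would first record that $V$ is an ideal of $L_V$ and $W$ is an ideal of $L_W$, being the abelian ideals of the two semidirect products. Since $\varphi$ is a Lie algebra isomorphism, $\varphi(V)$ is an ideal of $L_W$, and hence $W_1=\varphi(V)\cap W$, as the intersection of two ideals of $L_W$, is itself an ideal of $L_W$. Because $W_1\subseteq W$, the dictionary above shows that $W_1$ is a submodule of $W$. Pulling back, $V_1=\varphi^{-1}(W_1)$ is an ideal of $L_V$; moreover $W_1\subseteq\varphi(V)$ forces $V_1\subseteq V$, so $V_1$ is an ideal of $L_V$ contained in $V$ and is therefore a submodule of $V$.

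It then remains to compute dimensions. Since $\varphi$ is an isomorphism, $\dim_{\mathbb K}L_V=\dim_{\mathbb K}L_W$, and subtracting the dimension $2$ of the complementary abelian subalgebra in each case gives $\dim_{\mathbb K}V=\dim_{\mathbb K}W$. Also $\dim_{\mathbb K}\varphi(V)=\dim_{\mathbb K}V$ and $\dim_{\mathbb K}V_1=\dim_{\mathbb K}W_1$. Applying the dimension formula to the subspaces $\varphi(V)$ and $W$ of $L_W$ yields $\dim_{\mathbb K}(\varphi(V)+W)=\dim_{\mathbb K}V+\dim_{\mathbb K}W-\dim_{\mathbb K}W_1$. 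In case (i), $\varphi(V)+W=L_W$ has dimension $\dim_{\mathbb K}W+2$, so $\dim_{\mathbb K}V-\dim_{\mathbb K}W_1=2$; in case (ii) the same computation with $\dim_{\mathbb K}(\varphi(V)+W)/W=1$ gives $\dim_{\mathbb K}V-\dim_{\mathbb K}W_1=1$. Since $\dim_{\mathbb K}V/V_1=\dim_{\mathbb K}V-\dim_{\mathbb K}W_1$ and, using $\dim_{\mathbb K}V=\dim_{\mathbb K}W$, also $\dim_{\mathbb K}W/W_1=\dim_{\mathbb K}W-\dim_{\mathbb K}W_1=\dim_{\mathbb K}V-\dim_{\mathbb K}W_1$, both quotients equal $2$ in case (i) and $1$ in case (ii), as required.

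I do not expect a genuine obstacle here: the argument is essentially structural bookkeeping. The only points needing care are the submodule–ideal dictionary (which relies on $V$ and $W$ being \emph{abelian} ideals, so that brackets reduce to the operator actions) and the containment $V_1\subseteq V$ (which rests on $W_1$ being defined as an intersection sitting inside $\varphi(V)$). Once these are verified, the dimension identities follow from the rank–nullity count for $\varphi$ together with $\dim_{\mathbb K}V=\dim_{\mathbb K}W$.
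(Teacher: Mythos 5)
Your proof is correct and follows essentially the same route as the paper's: both reduce to the identity $\dim_{\mathbb K}(\varphi(V)+W)/W=\dim_{\mathbb K}\varphi(V)/W_1$ (the paper via the second isomorphism theorem for modules, you via the inclusion--exclusion dimension formula) combined with $\dim_{\mathbb K}V=\dim_{\mathbb K}W$. Your explicit verification that $V_1$ and $W_1$ are submodules, through the ideal--submodule dictionary, is a welcome addition that the paper leaves implicit.
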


\begin{proof}
We have the canonical isomorphism for $\mathbb K[x, y]$-modules
\begin{equation}\label{canon}
\varphi (V)+W/W\simeq \varphi (V)/(\varphi (V)\cap W)=\varphi (V)/W_1.
\end{equation}
 Let $\varphi (V)+W=L_W.$ By (\ref{canon}) and $\dim _{\mathbb K}L_W/W=2$, $\dim _{\mathbb K}\varphi (V)/W_1=2. $ Using the isomorphism $\varphi ^{-1}$, we get $\dim _{\mathbb K}V/V_1=2.$
Since $\dim _{\mathbb K}L_V/V_1=4$,  $\dim _{\mathbb K}L_{W}/W_1=4. $  Then $\dim _{\mathbb K}W/W_1=2.$ The statement (ii) is proved analogously.
\end{proof}

\begin{lemma}\label{dim2}
Let $V$ and $W$ be $\mathbb K[x, y]$-modules that are finite dimensional over $\mathbb K$. Let $L_V=\mathbb K\langle P,Q \rangle \rightthreetimes V$ and $L_W=\mathbb K\langle S,T \rangle \rightthreetimes W$ be the  corresponding associated Lie algebras.
Let there exist an isomorphism of Lie algebras $\varphi :L_V \to L_W$ such that $\varphi(\mathbb K\langle P,Q \rangle)+W=L_W.$ Then the $\mathbb K[x,y]$-modules $V$ and $W$ are weakly isomorphic.
\end{lemma}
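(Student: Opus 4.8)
The plan is to reduce, by a weak isomorphism of $W$, to the situation where $\varphi$ carries $\mathbb K\langle P,Q\rangle$ onto $\mathbb K\langle S,T\rangle$ exactly, and then to read off a module isomorphism as a projection. First I would run the dimension count: since $\varphi$ is an isomorphism, $\dim_{\mathbb K}\varphi(\mathbb K\langle P,Q\rangle)=2$, and the hypothesis $\varphi(\mathbb K\langle P,Q\rangle)+W=L_W$ together with $\dim_{\mathbb K}L_W/W=2$ forces $\varphi(\mathbb K\langle P,Q\rangle)\cap W=0$. Hence the images of $\varphi(P),\varphi(Q)$ in $L_W/W\cong\mathbb K\langle S,T\rangle$ are linearly independent, so I may write $\varphi(P)=\alpha_{11}S+\alpha_{12}T+w_1$ and $\varphi(Q)=\alpha_{21}S+\alpha_{22}T+w_2$ with $w_1,w_2\in W$ and $[\alpha_{ij}]$ nonsingular. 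Exactly as in Remark \ref{remark}, passing to the twisted module $W_{\theta}$ for the automorphism $\theta$ given by $[\alpha_{ij}]$ (which does not change the Lie algebra, by Proposition \ref{propos}) lets me assume $\varphi(P)=S+w_1$ and $\varphi(Q)=T+w_2$, where now $S,T$ denote the operators of $W_{\theta}$.

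Next I would absorb $w_1,w_2$ into an automorphism of $L_W$. Define $\sigma\colon L_W\to L_W$ by $\sigma|_W=\mathrm{id}$, $\sigma(S)=S+w_1$, and $\sigma(T)=T+w_2$. A direct check of the defining bracket relations shows that $\sigma$ is a Lie-algebra homomorphism precisely when $S(w_2)=T(w_1)$, and this identity holds for free because $0=\varphi([P,Q])=[\varphi(P),\varphi(Q)]=[S+w_1,\,T+w_2]=S(w_2)-T(w_1)$. Thus $\sigma$ is an automorphism, and $\psi:=\sigma^{-1}\circ\varphi$ is an isomorphism $L_V\to L_W$ with $\psi(P)=S$ and $\psi(Q)=T$.

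Finally I would extract the module isomorphism. Writing each $\psi(v)$ in the decomposition $L_W=\mathbb K\langle S,T\rangle\oplus W$ as $\psi(v)=\alpha(v)S+\beta(v)T+\gamma(v)$ with $\gamma(v)\in W$, I compute $\psi(Pv)=[\psi(P),\psi(v)]=[S,\psi(v)]=S(\gamma(v))$ and likewise $\psi(Qv)=T(\gamma(v))$. Comparing the $\mathbb K\langle S,T\rangle$- and $W$-components gives $\gamma(Pv)=S\gamma(v)$ and $\gamma(Qv)=T\gamma(v)$, that is, $\gamma\colon V\to W_{\theta}$ is a homomorphism of $\mathbb K[x,y]$-modules. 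Since $\psi(\mathbb K\langle P,Q\rangle)=\mathbb K\langle S,T\rangle$, the subspace $\psi(V)$ is a complement to $\mathbb K\langle S,T\rangle$ in $L_W$, so $\psi(V)\cap\mathbb K\langle S,T\rangle=0$; therefore $\gamma(v)=0$ forces $\psi(v)=\alpha(v)S+\beta(v)T\in\psi(V)\cap\mathbb K\langle S,T\rangle=0$ and hence $v=0$. Thus $\gamma$ is injective, and since $\dim_{\mathbb K}V=\dim_{\mathbb K}W$ it is a module isomorphism $V\simeq W_{\theta}$. As $W_{\theta}$ is weakly isomorphic to $W$, the modules $V$ and $W$ are weakly isomorphic.

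The step I expect to be the main obstacle is the construction of $\sigma$: one must notice that the unwanted translation terms $w_1,w_2$ can be realized by an automorphism of $L_W$, and that the only obstruction $S(w_2)-T(w_1)$ to $\sigma$ being well defined vanishes automatically because $\varphi(P)$ and $\varphi(Q)$ commute. The other conceptual point worth flagging is that one should \emph{not} try to prove $\varphi(V)=W$, which need not hold since $\alpha,\beta$ may be nonzero off $PV+QV$; instead the projection $\gamma$ onto $W$ already furnishes the isomorphism, its injectivity coming purely from the fact that $\psi(V)$ meets $\mathbb K\langle S,T\rangle$ trivially.
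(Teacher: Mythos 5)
Your proof is correct, but it takes a genuinely different and more uniform route than the paper. The paper first normalizes $\varphi(P)=S+w_1$, $\varphi(Q)=T+w_2$ exactly as you do, but then splits into cases according to $\dim_{\mathbb K}(\varphi(V)+W)/W\in\{0,1,2\}$: the case $\varphi(V)=W$ is dispatched by Remark \ref{restriction}, and in the remaining two cases the authors choose coset representatives $v_1,v_2$ (or a single $v_0$) of $V$ over $V_1=\varphi^{-1}(\varphi(V)\cap W)$, define a module map by hand on those representatives (e.g.\ $\widehat{\varphi}(v_1)=u_1-w_1$, $\widehat{\varphi}(v_2)=u_2-w_2$), and verify surjectivity by a separate linear-independence argument. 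Your key move --- absorbing the translation terms $w_1,w_2$ into the unipotent automorphism $\sigma$ of $L_W$ (whose well-definedness hinges on $S(w_2)=T(w_1)$, the same identity \eqref{TS} the paper extracts from $[\varphi(P),\varphi(Q)]=0$) so that $\psi=\sigma^{-1}\circ\varphi$ sends $P\mapsto S$, $Q\mapsto T$ --- eliminates the case division entirely: the projection $\gamma$ of $\psi|_V$ onto $W$ along $\mathbb K\langle S,T\rangle$ is automatically a module homomorphism, and its injectivity follows at once from $\psi(V)\cap\mathbb K\langle S,T\rangle=0$. In fact your $\gamma$ agrees with the paper's $\widehat{\varphi}$ on the chosen representatives (e.g.\ $\gamma(v_1)=u_1-w_1$), so the two proofs build the same isomorphism; yours just derives it intrinsically rather than piecewise, at the modest cost of introducing and verifying the automorphism $\sigma$. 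All the steps you flag as delicate check out.
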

\begin{proof}
Since $\varphi$ is injective and $\dim _{\mathbb K}V=\dim _{\mathbb K}W<\infty ,$ it is easy to see that $\varphi (V)\subseteq W$ implies $\varphi (V)=W.$ By Remark \ref{restriction}, we can  assume that
$\varphi(V)\nsubseteq W$.
 Since $\varphi(\mathbb K\langle P,Q\rangle)+W=L_W$, we can also assume that
 \begin{equation}\label{PQ}
 \varphi(P)=S+w_1, \qquad \varphi(Q)=T+w_2
 \end{equation}
  for some elements $w_1,w_2\in W$ (passing  to a weakly isomorphic module $W_{\theta}$, if it is needed).

We consider two cases.

\emph{Case 1:  $\varphi(V)+W=L_W$.} Write $W_1:=\varphi(V)\cap  W$ and $V_1:=\varphi^{-1}(W_1).$
By Lemma \ref{dimensions},  $V_1$ is a submodule of the $\mathbb K[x, y]$-module $V$ with  $\dim _{\mathbb K}V/V_1=2$
and $W_1$ is a submodule of $W$ with $\dim _{\mathbb K}W/W_1=2.$
 Therefore,  $V=\mathbb K\langle v_1,v_2\rangle +V_1$ for some  $v_1,v_2\in V\setminus V_1$.
Let $\varphi(v_1)=\alpha_{11}S+\alpha_{12}T+u_1$ and $\varphi(v_2)=\alpha_{21}S+\alpha_{22}T+u_2$ for some  $\alpha _{ij}\in \mathbb K$ and $ u_1,u_2\in W$. Since $\varphi (V)+W=L_W,$ the matrix $[\alpha _{ij}]$  is nonsingular. Therefore, we can choose  $v_1$ and $v_2$ such that
     \begin{equation}\label{v1,v2}
     \varphi(v_1)=S+u_1, \qquad \varphi(v_2)=T+u_2.
     \end{equation}

Since $[P,Q]=0$ in the Lie algebra $L_V$, and $\varphi$ is an isomorphism of Lie algebras, we get $$0=\varphi([P,Q])=[\varphi(P),\varphi(Q)]
=[S+w_1,T+w_2]=S(w_2)-T(w_1),$$ and so \begin{equation}\label{TS}
T(w_1)=S(w_2).
\end{equation}
Analogously, the equality $[v_1, v_2]=0$ on $L_V$ implies
\begin{equation}\label{ST}
S(u_{2})=T(u_{1}).
\end{equation}
The images of products of elements from $L_V$ in the Lie algebra $L_W$ are
\begin{equation}\label{Pv1lem2}
\varphi([P,v_1])=[S+w_1,S+u_1]=S(u_1)-S(w_1)=S(u_1-w_1).
\end{equation}
\begin{equation}\label{Qv2lem2}
\varphi([Q,v_2])=[T+w_2,T+u_2]=T(u_2)-T(w_2)=T(u_2-w_2).
\end{equation}
Using (\ref{ST}) and (\ref{TS}), we obtain
\begin{equation}\label{Pv2lem2}
\varphi([P,v_2])=[S+w_1,T+u_2]=S(u_2)-T(w_1)=S(u_2-w_2).
\end{equation}
\begin{equation}\label{Qv1lem2}
\varphi([Q,v_1])=[T+w_2,S+u_1]=T(u_1)-S(w_2)=T(u_1-w_1).
\end{equation}

The equality $[V, V_1]=0$ in the Lie algebra $L_V$ implies
$$\varphi([V, V_1])=0=[\varphi(V),\varphi(V_1)]=[\varphi(V), W_1] .$$
 Since $[W, W_1]=0$  and $L_W=\varphi (V)+W,$ we obtain
$[L_W, W_1]=0,$ which implies $[L_V, V_1]=0.$  The last two equalities ensure
\begin{equation}
[P, V_1]=[Q, V_1]=0, \qquad [S, W_1]=[T, W_1]=0.
\end{equation}

Let us show that the linear operators $P$ and $Q$ act trivially on the quotient module $V/V_1.$ Let us consider, for example,  $P(v_1)=[P, v_1].$
 Since $P(v_1)\in V$ and $\varphi(P(v_1))\in W$ (by \eqref{Pv1lem2}), we have $\varphi(P(v_1))\in W_1$. Using $V_1=\varphi^{-1}(W_1)$, we find that $P(v_1)\in V_1$.
Using (\ref{Pv2lem2}),
 (\ref{Qv1lem2}), and (\ref{Qv2lem2}), we analogously prove that $P(v_2), Q(v_1), Q(v_2)\in V_1$.

 Define  the linear map $\widehat{\varphi}:V \to W$ as follows:
$\widehat{\varphi}(v)=\varphi(v)$ for $v\in V_1$, $\widehat{\varphi}(v_1)=u_1-w_1$, and $\widehat{\varphi}(v_2)=u_2-w_2$. It is easy to check  for all $v\in V_1$ that
$$\widehat{\varphi}(x\cdot v)=\widehat{\varphi}(P(v))=0=S(\varphi(v))=x\cdot\widehat{\varphi}(v), $$
 $$ \widehat{\varphi}(x\cdot v_1)=\widehat{\varphi}(P(v_1))=\varphi(P(v_1))=S(u_1-w_1)=x\cdot(u_1-w_1)=x\cdot \widehat{\varphi}(v_1).$$ Analogous relations hold for $x\cdot v_2$,  $y\cdot v_1$, and $y\cdot v_2$. Therefore, the linear mapping $\widehat{\varphi}$ is a homomorphism of $\mathbb K[x, y]$-modules $V$ and $W.$

Let us show that $\widehat{\varphi}$ is a surjective homomorphism. It is sufficient to verify that $u_1-w_1$ and $u_2-w_2$ are linearly independent over $W_1$ in the vector space $W$.  Conversely, suppose that $\alpha(u_1-w_1)+\beta(u_2-w_2)\in W_1$ for some $\alpha, \beta \in \mathbb K$ such that at least one of them is nonzero. Then
$$\varphi(\alpha (v_1-P)+\beta (v_2-Q))=\alpha (u_1-w_1)+\beta (u_2-w_2)\in W_1,$$
$\alpha (v_1-P)+\beta (v_2-Q)\in V_1$, and therefore
 $\alpha P+\beta Q\in V,$ which is impossible because  $\mathbb K\langle P, Q\rangle \cap  V=0.$ We have proved that $\widehat{\varphi}$ is surjective. Since the modules $V$ and $W$ have the same dimension over $\mathbb K,$ the mapping $\widehat{\varphi}$ is an isomorphism between  $\mathbb K[x, y]$-modules $V$ and  $W.$

\emph{Case 2: $\varphi(V)+W\neq L_W$.}  Then $\dim _{\mathbb K}(\varphi(V)+W)/W=1.$  By Lemma \ref{dimensions},  $W_1=\varphi(V)\cap W$ is a submodule of  $W$ with $\dim _{\mathbb K}W/W_1=1$, and $V_1=\varphi^{-1}(W_1)$ is a submodule of $V$ with  $\dim _{\mathbb K}V/V_1=1.$ Take any $v_0\in V\setminus V_1.$ Then $V=\mathbb K\langle v_{0}\rangle +V_1$ and $\varphi (v_0)=\alpha S+\beta T+w_0$ for some  $\alpha, \beta \in \mathbb K$ and $ w_0 \in W$. As earlier,  we assume that  $\varphi (P)=S+w_1, \varphi (Q)=T+w_2$.
 Then
 $$\varphi([P,v_o])=[\varphi(P),\varphi(v_0)]=[S+w_1, \alpha S+\beta T+w_0]=S(w_0)-\alpha S(w_1)-\beta T(w_1),$$
  $$\varphi([Q,v_o])=[\varphi(Q),\varphi(v_0)]=[T+w_2, \alpha S+\beta T+w_0]=T(w_0)-\alpha S(w_2)-\beta T(w_2).$$
Using (\ref{TS}),  we get
 \begin{equation}\label{Pv0}
 \varphi([P,v_o])=S(w_0-\alpha w_1-\beta w_2),  \ \ \ \varphi([Q,v_o])=S(w_0-\alpha w_1-\beta w_2). \end{equation}
These equalities imply $\varphi(P(v_0)),\varphi(Q(v_0))\in W$. As in Case 1, $P(v_0),Q(v_0)\in V_1$.
Define the linear map $\widetilde \varphi:V \to W$ by  $\widetilde \varphi(v)=\varphi(v)$ for $v\in V_1$ and $\widetilde \varphi(v_0)=w_0-\alpha w_1-\beta w_2$. By (\ref{Pv0}), $\widetilde\varphi$ is a homomorphism of $\mathbb K[x,y]$-modules (note that $[P, v_1], [Q, v_1]\in V_1$ for all $v_1\in V_1$).

Let us show that $w_0-\alpha w_1-\beta w_2\not \in W_1$. Conversely, let there exist $v\in V_1$ such that $w_0-\alpha w_1-\beta w_2=\varphi(v)$. Then
$$\varphi(v_0-v)=\alpha S+\beta T+w_0-w_0+\alpha w_1+\beta w_2=\alpha(S+w_1)+\beta(T+w_2)=\varphi(\alpha P+\beta Q).$$
Since  $\varphi$ is an isomorphism of Lie algebras, these equalities imply  $v_0-v=\alpha P+\beta Q.$  Then $v_0-v\in V\cap \mathbb K\langle P,Q \rangle =0$. Hence  $v=v_0$ and $\varphi(v_0)\in W_1$, which contradicts our choice of $v_0$. Thus, $w_0-\alpha w_1-\beta w_2\not \in W_1$, $\widetilde \varphi$ is a surjective homomorphism, and so it is an isomorphism of $\mathbb K[x,y]$-modules.
\end{proof}

\begin{lemma}\label{example}
Let $V=\mathbb K\langle v_1, v_2, a_1, a_2 \rangle$ and $W=\mathbb K\langle w_1, w_2, b_1, b_2\rangle$ be modules over the ring $\mathbb K[x,y]$ with the following actions  of $x$ and $y$:

\item[\rm(i)] $xv_1=a_1$, $xv_2=a_2,$ and the  other products of $x, y$ and the basis elements of $V$ are zero,
\item[\rm(ii)] $xw_1=b_1$, $yw_1=b_2,$ and the   other products of $x, y$ and the basis elements of $W$ are zero.

Then the $\mathbb K[x,y]$-modules $V$ and $W$ are not weakly isomorphic, but the corresponding associated Lie algebras $L_V$ and $L_W$ are isomorphic.
\end{lemma}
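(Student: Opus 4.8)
My plan is to treat the two assertions of the lemma separately. For the non-existence of a weak isomorphism I would isolate a numerical invariant of the weak-isomorphism relation and compute it on both sides. The natural candidate is $\dim_{\mathbb K}(\mathbb K P+\mathbb K Q)\subseteq \operatorname{End}_{\mathbb K}(V)$, the dimension of the linear span of the two structure operators. This quantity is preserved by each elementary step defining weak isomorphism: a module isomorphism conjugates the pair $(P,Q)$ simultaneously, which does not change the dimension of its span, and a twist by $\theta$ replaces $(P,Q)$ with $(\alpha_{11}P+\alpha_{12}Q,\ \alpha_{21}P+\alpha_{22}Q)$ for a nonsingular matrix $[\alpha_{ij}]$, leaving the span $\mathbb K P+\mathbb K Q$ literally unchanged. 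Hence $\dim_{\mathbb K}(\mathbb K P+\mathbb K Q)$ is an invariant of weak isomorphism. For $V$ the operator of $y$ is zero while the operator of $x$ is nonzero, so this dimension equals $1$; for $W$ the operators of $x$ and $y$ send $w_1$ to the distinct basis vectors $b_1$ and $b_2$, so they are linearly independent and the dimension equals $2$. As $1\neq 2$, the modules $V$ and $W$ are not weakly isomorphic.

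For the isomorphism $L_V\cong L_W$ I would simply exhibit an explicit linear bijection intertwining the two bracket tables, the crucial point being that it must not respect the semidirect decomposition. Reading off the brackets, the only nonzero ones in $L_V$ are $[P,v_1]=a_1$ and $[P,v_2]=a_2$, while in $L_W$ they are $[S,w_1]=b_1$ and $[T,w_1]=b_2$. In $L_V$ the element $P$ is the single vector that brackets nontrivially with two others, and the same role in $L_W$ is played by the module element $w_1$; this suggests sending $P$ into $W$ and sending $v_1,v_2$ into the operator part. Concretely I would try
$$\varphi(P)=w_1,\quad \varphi(v_1)=S,\quad \varphi(v_2)=T,\quad \varphi(a_1)=-b_1,\quad \varphi(a_2)=-b_2,\quad \varphi(Q)=w_2,$$
which carries a basis to a basis and hence is a linear isomorphism. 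Checking brackets, $[\varphi(P),\varphi(v_1)]=[w_1,S]=-b_1=\varphi(a_1)$ and $[\varphi(P),\varphi(v_2)]=[w_1,T]=-b_2=\varphi(a_2)$, while every bracket vanishing in $L_V$ maps to a vanishing bracket in $L_W$; in particular $\varphi(Q)=w_2$ is central in $L_W$, matching the centrality of $Q$ in $L_V$. Thus $\varphi$ is a Lie algebra isomorphism.

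The step I expect to be the genuine obstacle is not the verification, which is a routine finite check, but \emph{guessing the right correspondence}, namely realizing that the isomorphism mixes the two-dimensional operator part with the module $V$ and does not satisfy $\varphi(V)=W$. Motivation for this comes from first comparing the structural invariants: both $L_V$ and $L_W$ are six-dimensional, two-step nilpotent, with three-dimensional center and two-dimensional derived subalgebra, and both induce the same alternating map $\wedge^2(L/Z(L))\to[L,L]$, of the standard form $e_1\wedge e_2\mapsto a_1$, $e_1\wedge e_3\mapsto a_2$, $e_2\wedge e_3\mapsto 0$. Matching these forms dictates the identification $P\leftrightarrow w_1$, $v_1\leftrightarrow S$, $v_2\leftrightarrow T$ used above. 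The conceptual payoff is precisely the failure of the hypothesis of Remark \ref{remark}: since $\varphi(V)\neq W$, the restriction argument that would otherwise force a weak isomorphism does not apply, which is exactly why $L_V$ and $L_W$ can be isomorphic although $V$ and $W$ are not weakly isomorphic.
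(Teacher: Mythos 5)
Your proposal is correct, and the two halves relate differently to the paper's proof. The Lie algebra isomorphism you construct is essentially the paper's map: the authors take $\overline\varphi(P)=-w_1$, $\overline\varphi(Q)=-w_2$, $\overline\varphi(v_1)=S$, $\overline\varphi(v_2)=T$, $\overline\varphi(a_i)=b_i$, which differs from yours only in where the minus signs are placed; your structural motivation (the induced alternating map $\wedge^2(L/Z(L))\to L'$ forcing $P\leftrightarrow w_1$) is not in the paper but is sound. For the non-existence of a weak isomorphism, however, you use a genuinely different invariant: the dimension of the span $\mathbb K\langle P,Q\rangle\subseteq {\rm End}_{\mathbb K}(V)$, which is $1$ for $V$ and $2$ for $W$, whereas the paper uses the annihilator $\Ann_V(\mathbb K[x,y])=\Ker P\cap \Ker Q$, which is $2$-dimensional for $V$ and $3$-dimensional for $W$. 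Both are correctly shown to be invariant under the two elementary moves (module isomorphism and linear twist) generating weak isomorphism, so both arguments are complete; yours is arguably the more immediate one, since a twist literally fixes the span $\mathbb K P+\mathbb K Q$, while the paper's annihilator invariant has the mild advantage of being phrased purely module-theoretically (it is the socle-type submodule $\{v: x\cdot v=y\cdot v=0\}$) and is reused implicitly in the structure of the later lemmas. No gaps; the verification of the bracket table you outline is the routine finite check you say it is.
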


 \begin{proof}
The annihilator $\Ann_W(\mathbb K[x,y]):=\{w\in W  \ | \ \mathbb K[x,y]\cdot w=0\}$ of $\mathbb K[x, y]$ in $W$ is a submodule of  $W.$ Then  $\Ann_W(\mathbb K[x,y])=\mathbb K\langle w_2,b_1,b_2\rangle$ and  $$\Ann_{W_{\theta}}(\mathbb K[x,y])=\Ann_{W}(\mathbb K[x,y])=\mathbb K\langle w_2, b_1,b_2\rangle$$
for any linear automorphism $\theta \in \Aut(\mathbb K[x,y])$.

Let us show that $\Ann_{V}(\mathbb K[x,y])=\mathbb K\langle a_1,a_2\rangle$. Take any $v\in V$ and write $v=\gamma_1v_1+\gamma_2v_2+z$, where $\gamma_1,\gamma_2 \in \mathbb K$ and $z\in \mathbb K\langle a_1,a_2\rangle$. If $v\in \Ann_{V}(\mathbb K[x,y])$, then $x\cdot v=0$ and therefore
   $\gamma_1a_1+\gamma_2a_2=0$ because $x\cdot z=0.$  Since $a_1$ and $a_2$ are linearly independent over $\mathbb K$, we get $\gamma_1=\gamma_2=0.$ Thus $v=z\in \mathbb K\langle a_1,a_2\rangle .$
    Clearly, $\mathbb K\langle a_1,a_2\rangle\subseteq \Ann_{V}(\mathbb K[x,y])$. Therefore, $\mathbb K\langle a_1,a_2\rangle= \Ann_{V}(\mathbb K[x,y])$.

Now suppose that the $\mathbb K[x,y]$-modules $V$ and $W$ are weakly isomorphic. Then there exists $\mathbb K[x,y]$-module $U$ such that $V\simeq U$ and $W=U_{\theta}$ for some linear automorphism  $\theta$ of the ring $\mathbb K[x, y].$ Since $V\simeq U$, $\dim_{\mathbb K}\Ann_U(\mathbb K[x,y])=2$. It is easy to see that $\Ann_{U}(\mathbb K[x,y])=\Ann_{U_{\theta}}(\mathbb K[x,y]).$
        Then $\Ann_W(\mathbb K[x,y])$ is of dimension 2 over $\mathbb K,$ which is impossible. The obtained contradiction shows that $V$ and $W$ are not weakly isomorphic.

Let $L_V$ and $L_W$ be the Lie algebras associated with $\mathbb K[x,y]$-modules $V$ and $W$. Define the linear map $\overline \varphi:L_V\to L_W$ by $$\overline \varphi(P)=-w_1,\mbox{ } \overline \varphi(Q)=-w_2, \ \overline \varphi(v_1)=S,\mbox{ } \overline \varphi(v_2)=T,  \  \overline \varphi(a_1)=b_1, \mbox{ } \overline \varphi(a_2)=b_2.$$ Then
$$\overline \varphi([P,v_1])=\overline \varphi(P(v_1))=\overline \varphi(a_1)=b_1=S(w_1)=[S,w_1]=[-w_1,S]=[\overline \varphi(P),\overline \varphi(v_1)],$$
$$\overline \varphi([Q,v_1])=\overline \varphi(Q(v_1))=\overline \varphi(0)=0=S(w_2)=[-w_2,S]=[\overline \varphi(Q),\overline \varphi(v_1)].$$

We analogously check that $\overline \varphi([P,v_2])=[\overline \varphi(P),\overline \varphi(v_2)]$ and $\overline \varphi([Q,v_2])=[\overline \varphi(Q),\overline \varphi(v_2)]$. Since the other products of basic elements are zero, we find that $\overline \varphi$ is an isomorphism of the Lie algebras $L_V$ and $L_W$.
 \end{proof}

\begin{lemma}\label{dim6}
Let $V$ and $W$ be $\mathbb K[x, y]$-modules that are finite dimensional over $\mathbb K$ and that are not weakly isomorphic.
Let $L_V=\mathbb K\langle P,Q \rangle \rightthreetimes V$ and  $L_W=\mathbb K\langle S,T \rangle \rightthreetimes W$ be their  associated Lie algebras. Let there exist an isomorphism $\varphi: L_V \to L_W$ of Lie algebras such that $\varphi(\mathbb K\langle P,Q\rangle)\subseteq W$. Then $V=V_0\oplus V_2$ and $W=W_0\oplus W_2$, where $V_2$ and $W_2$ are isomorphic $\mathbb K[x,y]$-modules with the trivial action of  $\mathbb K[x,y]$ on them, and where $V_0$ and $W_0$ are not weakly isomorphic $\mathbb K[x,y]$-modules with $\dim _{\mathbb K}V_0=\dim _{\mathbb K}W_0 \leq 6$.
\end{lemma}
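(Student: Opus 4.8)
The plan is to exploit the degeneracy forced by the hypothesis $\varphi(\mathbb K\langle P,Q\rangle)\subseteq W$: it will make both $V$ and $W$ into modules annihilated by $(x,y)^2$, after which the splitting is almost automatic. First I would fix the basic data. Since $\varphi$ is injective, $\varphi(\mathbb K\langle P,Q\rangle)$ is two-dimensional and lies in $W$, hence maps to $0$ in $L_W/W$; as $\varphi$ is onto, this forces $\varphi(V)+W=L_W$, so Lemma \ref{dimensions}(i) applies. Put $W_1:=\varphi(V)\cap W$ and $V_1:=\varphi^{-1}(W_1)$; these are submodules with $\dim_{\mathbb K}V/V_1=\dim_{\mathbb K}W/W_1=2$ and $V_1=\{v\in V:\varphi(v)\in W\}$. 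The two facts that drive everything are
\[
V_1\subseteq\Ann_V(\mathbb K[x,y]),\qquad xV+yV\subseteq V_1.
\]
The first holds because for $v\in V_1$ both $\varphi(P)$ and $\varphi(v)$ lie in the abelian ideal $W$, so $\varphi(P(v))=[\varphi(P),\varphi(v)]=0$ and likewise $\varphi(Q(v))=0$; injectivity gives $P(v)=Q(v)=0$. The second holds because for arbitrary $v\in V$, writing $\varphi(v)=\sigma S+\tau T+w$ with $w\in W$, we get $\varphi(P(v))=[\varphi(P),\varphi(v)]=-\sigma S(\varphi(P))-\tau T(\varphi(P))\in W$, so $P(v)\in V_1$, and similarly $Q(v)\in V_1$. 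In particular $(x,y)^2$ annihilates $V$.

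Next I would build the two decompositions in parallel. Set $R_V:=xV+yV$ and $R_W:=xW+yW$; these are precisely the derived subalgebras $[L_V,L_V]$ and $[L_W,L_W]$, contained in $V$ and $W$ respectively, so $R_W=\varphi(R_V)$ and hence $\dim_{\mathbb K}R_V=\dim_{\mathbb K}R_W$. On the $W$-side, since $V_1\subseteq\Ann_V$ gives $[L_V,V_1]=0$, I get $[L_W,W_1]=\varphi([L_V,V_1])=0$, so $W_1\subseteq\Ann_W(\mathbb K[x,y])$, while $R_W=\varphi(R_V)\subseteq\varphi(V_1)=W_1$. Choosing $v_1,v_2$ spanning $V$ over $V_1$, the inclusions above give $R_V=\mathbb K\langle xv_1,yv_1,xv_2,yv_2\rangle$, so $\dim_{\mathbb K}R_V\le 4$. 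Now pick a vector-space complement $V_2$ to $R_V$ inside $V_1$ and set $V_0:=\mathbb K\langle v_1,v_2\rangle+R_V$. Since $x,y$ annihilate $V_1$ (hence $R_V$) and carry $v_1,v_2$ into $R_V$, a direct check shows $V_0$ and $V_2$ are submodules, $V=V_0\oplus V_2$, the action on $V_2$ is trivial, and $\dim_{\mathbb K}V_0=2+\dim_{\mathbb K}R_V\le 6$. The identical construction on $W$ yields $W=W_0\oplus W_2$ with $W_2$ trivial and $\dim_{\mathbb K}W_0=2+\dim_{\mathbb K}R_W$.

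Because $\dim_{\mathbb K}V=\dim_{\mathbb K}W$ and $\dim_{\mathbb K}R_V=\dim_{\mathbb K}R_W$, I obtain $\dim_{\mathbb K}V_0=\dim_{\mathbb K}W_0\le 6$ and $\dim_{\mathbb K}V_2=\dim_{\mathbb K}W_2$; two trivial modules of equal dimension are isomorphic, so $V_2\simeq W_2$, which is the first assertion. Finally I would prove that $V_0$ and $W_0$ are not weakly isomorphic, arguing by contradiction. If they were, then (by the characterization of weak isomorphism recorded after the definition) there would be a module $U_0\simeq V_0$ with $W_0=(U_0)_\theta$ for some linear $\theta$. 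Put $U:=U_0\oplus V_2$. Then $U\simeq V_0\oplus V_2\simeq V$, while $U_\theta=(U_0)_\theta\oplus(V_2)_\theta=W_0\oplus V_2\simeq W_0\oplus W_2\simeq W$, using that a trivial module is unchanged by a linear twist and that $V_2\simeq W_2$. Hence $V$ is weakly isomorphic to $U$, $U$ to $U_\theta$, and $U_\theta$ to $W$, so by transitivity $V$ and $W$ are weakly isomorphic, contradicting the hypothesis.

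I expect the main obstacle to be the structural step in the first two paragraphs: correctly establishing $V_1\subseteq\Ann_V(\mathbb K[x,y])$ and $xV+yV\subseteq V_1$ together with their $W$-analogues, since these are exactly what collapse the module structure to one over $\mathbb K[x,y]/(x,y)^2$ and make the direct-sum splitting go through. The dimension bookkeeping is then the linchpin for the matching: tracking $\dim_{\mathbb K}R_V=\dim_{\mathbb K}R_W$ through the derived subalgebra is what forces $\dim_{\mathbb K}V_2=\dim_{\mathbb K}W_2$, and this identification of the trivial summands is precisely what lets the final contradiction argument lift a weak isomorphism of the cores $V_0,W_0$ back to one of $V,W$.
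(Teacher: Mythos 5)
Your proof is correct and follows essentially the same route as the paper's: both establish $\varphi(V)+W=L_W$, show the action is trivial on the codimension-two submodules $V_1$ and $W_1$, bound $xV+yV=L_V'$ by dimension $4$ via its identification with the derived subalgebra, and split off a trivial complement. Your only real additions are cosmetic clean-ups — deriving $V_1\subseteq\Ann_V(\mathbb K[x,y])$ directly from $\varphi(P),\varphi(V_1)\subseteq W$, and spelling out the final contradiction showing $V_0$ and $W_0$ cannot be weakly isomorphic, which the paper merely asserts.
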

 \begin{proof}
We have $\varphi(V)+W=L_W$ since otherwise $$\varphi(L_V)\subseteq \varphi(\mathbb K\langle P,Q\rangle)+\varphi(V)\subseteq \varphi(V)+W\subset L_W,$$ which is impossible by $\varphi(L_V)=L_W$.
Write $W_1:=\varphi(V)\cap W$  and  $V_1:=\varphi^{-1}(W_1).$ By Lemma \ref{dimensions}, $W_1$ is a submodule of codimension $2$ in $W$, and $V_1$ is a submodule of codimension $2$ in $V.$
Take any $v_1, v_2\in V\setminus V_1$ such that  $V=\mathbb K\langle v_1, v_2\rangle +V_1.$
By conditions of the lemma, $\varphi(\mathbb K\langle P,Q\rangle)\subseteq W,$ and so  $\varphi(P)=w_1$ and $\varphi(Q)=w_2$ for some  $w_1, w_2\in W.$  Since $\varphi (V)+W=L_W,$ there are $v_1, v_2\in V$ such that $\varphi(v_1)=S+u_1$ and $\varphi(v_2)=T+u_2$, where $u_1, u_2\in W.$ Since $[v_1, v_2]=0$, we get
$$\varphi([v_1,v_2])=[\varphi(v_1),\varphi(v_2)]=[S+u_1,T+u_2]=S(u_2)-T(u_1)=0,$$ which implies $S(u_2)=T(u_1)$.  Similarly, \begin{equation}\label{Pv1lem4}
\varphi([P,v_1])=[\varphi(P),\varphi(v_1)]=[w_1, S+u_1]=-S(w_1),\end{equation}
\begin{equation}\label{Pv2lem4}
\varphi([P,v_2])=[\varphi(P),\varphi(v_2)]=[w_1, T+u_2]=-T(w_1),\end{equation}
\begin{equation}\label{Qv1lem4}
\varphi([Q,v_1])=[\varphi(Q),\varphi(v_1)]=[w_2, S+u_1]=-S(w_2),\end{equation}
\begin{equation}\label{Qv2lem4}
\varphi([Q,v_2])=[\varphi(Q),\varphi(v_2)]=[w_2, T+u_2]=-T(w_2).\end{equation}
It follows from $[v_1,V_1]=[v_2,V_1]=0$ that
\begin{align*}
\varphi([v_1,V_1])=[S+u_1,W_1]=[S,W_1]=0, \\ \varphi([v_2,V_1])=[T+u_2,W_1]=[T,W_1]=0.
\end{align*}
Hence $W_1\subseteq Z(L_W)$ and therefore  $V_1=\varphi^{-1}(W_1)\subseteq Z(L_V)$, where $Z(L_V)$ and $Z(L_W)$ are the centers of the Lie algebras $L_V$ and $L_W$.

Let us show that the derived subalgebra $L_V'$ of the Lie algebra $L_V$ is of dimension $\leq 4$ over $\mathbb K.$
All $g_1, g_2\in L$ are represented in the form $$g_1=\alpha_{1}P+\beta_{1}Q+\gamma_{1}v_1+\delta_{1}v_2+u_3,\qquad g_2=\alpha_{2}P+\beta_{2}Q+\gamma_{2}v_1+\delta_{2}v_2+u_4,$$ where  $\alpha_{i},\beta_{i},\gamma_i,\delta_i\in \mathbb K$, $i=1,2$, and $u_3,u_4\in V_1$. We have $$[g_1, g_2]=\alpha_{11}[P,v_1]+\alpha_{12}[P,v_2]+\alpha_{21}[Q,v_1]+\alpha_{22}[Q,v_2]$$ for some $\alpha_{ij}\in \mathbb K$.  Thus, $L'_V$ is a $\mathbb K$-linear hull of the elements $[P,v_1],\ [P,v_2],\ [Q,v_1]$, and $ [Q,v_2]$, and so $\dim _{\mathbb K}L'_V\leq 4$.

 Consider the  subalgebra $V_0$ of the Lie algebra $L_V$ generated by $v_1, \  v_2$, and their images under the action of the operators $P$ and $Q.$   It is easy to show that  $V_0$ is a submodule of $\mathbb K[x,y]$-module $V$, and $V_0$ is the linear hull of elements   $v_1,\ v_2,\ P(v_1),\ P(v_2),\ Q(v_1)$, and $ Q(v_2),$  and so  $\dim _{\mathbb K}V_{0}\leq 6.$
 Take any subspace $V_2$ of the vector space $V_1$ such that $V_1=(V_0\cap V_1)\oplus V_2$.
  It is easy to see that $V=V_0\oplus V_2$ and that $\mathbb K[x,y]$ acts trivially on $V_2$  since $V_2\subset V_1$ and $[P,V_1]=[Q,V_1]=0$. Thus, $V=V_0\oplus V_2$ is a direct sum of $\mathbb K[x, y]$-submodules.

Analogously, we  consider the subalgebra $W_0$ of the Lie algebra $L_W$ that is generated by $w_1,\ w_2$, and their images  under the action of $S$ and $T$. It is clear that $L'_W$  coincides with the $\mathbb K$-linear hull of elements $[S,w_1]$, $[S,w_2]$, $[T,w_1]$, and $[T,w_2]$. Since $L_V$ and $L_W$ are isomorphic Lie algebras, $\dim _{\mathbb K}L'_V=\dim _{\mathbb K}L'_W\leq 4$. Furthermore, $W_0$ is a submodule of $W$ and $\dim _{\mathbb K}V_0=\dim _{\mathbb K}W_0\leq 6$. Take any direct complement  $W_2$ of $W_0\cap W_1$ in the $\mathbb K[x, y]$-module $W_1$. Then $W=W_0\oplus W_2$ is a direct sum of submodules, and $\mathbb K[x,y]$ acts on $W_2$ trivially since $W_2\subset W_1$. Using $\dim _{\mathbb K}V_2=\dim _{\mathbb K}W_2,$ we get  that $V_2$ and $W_2$ are isomorphic $\mathbb K[x,y]$-modules. By the conditions of lemma, the $\mathbb K[x,y]$-modules $V$ and $W$ are not weakly isomorphic, and so $V_0$ and $W_0$ are not weakly isomorphic too.
\end{proof}

 \begin{lemma}\label{dim1}
Let $V$ and $W$ be $\mathbb K[x, y]$-modules that are finite dimensional over $\mathbb K$.
Let  $L_V=\mathbb K\langle P,Q \rangle \rightthreetimes V$ and $L_W=\mathbb K\langle S,T \rangle \rightthreetimes W$ be  the corresponding associated Lie algebras. Let there exist an isomorphism $\varphi: L_V \to L_W$ of Lie algebras such that $\dim_{\mathbb K}(\varphi(V)+W)/W=1$ and $\dim_{\mathbb K}(\varphi(\mathbb K\langle P,Q \rangle)+W)/ W=1.$ Then the $\mathbb K[x, y]$-modules $V$ and $W$  are weakly isomorphic.
 \end{lemma}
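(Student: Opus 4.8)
The plan is to normalize $\varphi$ by passing to weakly isomorphic modules, to squeeze all needed relations out of the single identity $[P,Q]=0$ together with the commutativity of the ideals $V,W$, and then to build an explicit module isomorphism between $V$ and a suitable twist $W_\theta$.

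First I would invoke Lemma \ref{dimensions}(ii): since $\dim_{\mathbb K}(\varphi(V)+W)/W=1$, the subspaces $W_1:=\varphi(V)\cap W$ and $V_1:=\varphi^{-1}(W_1)$ are submodules of codimension $1$ in $W$ and in $V$. Because $\dim_{\mathbb K}(\varphi(\mathbb K\langle P,Q\rangle)+W)/W=1$, I can pick a basis $P,Q$ of $\mathbb K\langle P,Q\rangle$ (this twists $V$) with $\varphi(Q)\in W$ and $\varphi(P)\notin W$. Choosing $v_0\in V\setminus V_1$ and observing that $\overline{\varphi(P)}$ and $\overline{\varphi(v_0)}$ span $L_W/W$, a linear change of the basis $S,T$ (this twists $W$) realizes simultaneously
\[
\varphi(P)=S+w_1,\qquad \varphi(Q)=w_2\in W,\qquad \varphi(v_0)=T+w_0,
\]
with $w_0,w_1,w_2\in W$. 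Applying $\varphi$ to $[P,Q]=0$ gives $S(w_2)=0$ immediately.

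Next I would extract the structural facts. For $v\in V_1$ we get $\varphi([P,v])=[S+w_1,\varphi(v)]=S(\varphi(v))$ and $\varphi([Q,v])=[w_2,\varphi(v)]=0$, so $y$ annihilates $V_1$; moreover $T(\varphi(v))=[\varphi(v_0)-w_0,\varphi(v)]=\varphi([v_0,v])=0$ since $v_0,v$ lie in the abelian ideal $V$. Hence $\psi:=\varphi|_{V_1}\colon V_1\to W_1$ is a module isomorphism, on both sides of which $y$ acts trivially; writing $N:=x|_{V_1}$ we have $S|_{W_1}=\psi N\psi^{-1}$. I would then identify $\varphi^{-1}(W)=V_1\oplus\mathbb K Q$ by a dimension count (using $\varphi(Q)\in W$), so every $w\in W$ has the form $w=\varphi(v'+cQ)$, $v'\in V_1$, $c\in\mathbb K$; the same bracket computations give the key formulas $S(w)=\varphi(xv')=\psi(Nv')\in S(W_1)$ and $T(w)=c\,T(w_2)$. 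In particular $S(w_0)\in S(W_1)$ and, writing $w_1=\varphi(v_1'+c_1Q)$, one has $T(w_1)=c_1T(w_2)$. Finally $\varphi([P,v_0])=S(w_0)-T(w_1)$ and $\varphi([Q,v_0])=-T(w_2)$ both lie in $W$, so $xv_0,yv_0\in V_1$.

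The hard part is that, unlike in Lemma \ref{dim2}, the operator part $T$ of $\varphi(v_0)$ is a direction absent from $\varphi(\mathbb K\langle P,Q\rangle)$, so $\varphi$ cannot be turned into a module map $V\to W$ just by subtracting a combination of $\varphi(P),\varphi(Q)$; a genuine twist is unavoidable. To absorb the discrepancy I would use the shear $\theta(x)=x+c_1y$, $\theta(y)=y$, so that in $W_\theta$ the element $x$ acts by $S+c_1T$ and $y$ by $T$, and set $w^*:=\varphi(v_0'-Q)$, where $v_0'$ is the $V_1$-component of $\varphi^{-1}(w_0)$. From the formulas above, $S(w^*)=\psi(Nv_0')=S(w_0)$ and $T(w^*)=-T(w_2)$, so the linear map $\Psi\colon V\to W$ with $\Psi|_{V_1}=\psi$ and $\Psi(v_0)=w^*$ intertwines the $V$-action with the $W_\theta$-action: on $V_1$ this is clear, because $y$ acts trivially on $W_1$ and the twist leaves $W_1$ unchanged, while on $v_0$ the identities $(S+c_1T)w^*=S(w_0)-c_1T(w_2)=\varphi(xv_0)$ and $Tw^*=-T(w_2)=\varphi(yv_0)$ are exactly what is required. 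As $w^*\notin W_1$ and $\psi$ maps $V_1$ onto $W_1$, the map $\Psi$ is bijective, hence a module isomorphism $V\simeq W_\theta$, and therefore $V$ and $W$ are weakly isomorphic. I expect the normalization (justifying that a single twist of $W$ achieves both $\varphi(P)=S+w_1$ and $\varphi(v_0)=T+w_0$) and the careful bookkeeping of the action formulas $S(w),T(w)$ to be the most delicate points.
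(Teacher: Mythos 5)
Your proof is correct and follows essentially the same route as the paper: reduce via Lemma \ref{dimensions} to codimension-one submodules $V_1$, $W_1$, normalize $\varphi$ by twists so that one generator of $\mathbb K\langle P,Q\rangle$ lands in $W$ while the remaining operator directions sit on the other generator and on a coset representative of $V_1$, extract the bracket relations, and extend $\varphi|_{V_1}$ to a module isomorphism by a suitable choice of image for the extra basis vector (the paper absorbs your final shear $\theta(x)=x+c_1y$ earlier, by replacing $Q$ with $Q-\mu P$, but this is only a reordering of the same normalizations). The one assertion you leave unjustified, $w^*\notin W_1$, is immediate: $w^*=\varphi(v_0'-Q)\in W_1=\varphi(V_1)$ would force $Q\in V$, contradicting $\mathbb K\langle P,Q\rangle\cap V=0$.
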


\begin{proof}
Write  $W_1:=\varphi(V)\cap W$  and $V_1:=\varphi^{-1}(W_1).$  By Lemma \ref{dimensions},   $W_1$ is a submodule  of codimension 1 in $W$   and  $V_1$ is a submodule of codimension 1 in $V$.
Take any $v_1\in V\setminus V_1.$ Then $\mathbb K\langle v_1\rangle +V_1=V.$  Passing to a weakly isomorphic module $W_{\theta}$, if it is needed, we can assume that $\varphi(v_1)=S+u_1$ for some  $u_1\in W.$ The condition $\dim _{\mathbb K}(\mathbb K\langle P, Q\rangle  +W)/W=1$ implies that $\varphi (\mathbb K\langle P, Q\rangle )\cap W\ne 0,$ and therefore $\varphi (\alpha P+\beta Q)=w_1\in W$ for some $\alpha , \beta \in \mathbb K$ and nonzero $w_1\in W.$ Passing  to a weakly isomorphic module $V_{\sigma}$, if it is needed, we can assume that $\varphi (P)=w_1$ and get
\begin{equation}\label{v1P}
\varphi (v_1)=S+u_1, \qquad \varphi (P)=w_1.
\end{equation}
Since $V$ is an abelian subalgebra of $L_V$, $[v_1,V_1]=0$. Using $\varphi(V_1)=W_1$, we obtain $$\varphi([v_1,V_1])=[\varphi(v_1),\varphi(V_1)]=[S+u_1, W_1]=[S, W_1]=0.$$
  Similarly, $[w_1,W_1]=0$ implies $\varphi^{-1}([w_1, W_1])=[P,V_1]=0$. Since $P\notin V_1$, $\varphi (P)=w_1\notin  W_1. $ Therefore, $W=\mathbb K\langle w_1\rangle +W_1.$

Without loss of generality, we can assume that $u_1\in \mathbb K\langle w_1 \rangle ,$ where $u_1$ is defined in   (\ref{v1P}).  Indeed, $u_1=\alpha_1 w_1+w_2$ for some $\alpha_1\in \mathbb K$ and $w_2\in W_1$. If $w_2\neq 0$, then we can take $v_1-v_2$ instead of $v_1$ for some $v_2\in V_1$ such that $\varphi(v_2)=w_2$ (it is possible because $\varphi: V_1 \to W_1$ is a bijection). Therefore, $u_1=\alpha_1 w_1.$
Further, $\varphi (Q)=\gamma S+\delta T+u_2$ for some $\gamma , \delta \in \mathbb K,\ u_2\in W.$ Note that $\delta \ne 0$ since otherwise $\varphi (\mathbb K\langle P, Q, v_1, V_{1}\rangle )\subseteq \mathbb K\langle S\rangle +W\not =L_W$,  which is impossible. We can assume that $\varphi (Q)=T+u_3$ for some $u_3\in W$  (passing  to a module $W_{\pi}$ if it is needed, where $\pi (x)=x, \pi (y)=y/\delta -\gamma x/\delta$).  Moreover, replacing $Q$ by $Q'=Q-\mu P$ for some $\mu \in \mathbb K$ and using $\varphi (P)=w_1$, we can assume that $u_3\in W_1$. This means that we go to the module $V_{\rho}$ with the automorphism $\rho$ defined by $\rho (x)=x$ and $ \rho (y)=y-\mu x.$  The last two automorphisms save the relations (\ref{v1P}), so we can use them in the sequel. By
$$ 0=\varphi([P,Q])=[\varphi (P),\varphi (Q)]=[w_1,T+u_3]=-T(w_1),$$ we have
\begin{equation}\label{Tw1}
T(w_1)=0.
\end{equation}
We get
\begin{equation}\label{Pv1lem5}
\varphi([P,v_1])=[\varphi(P),\varphi(v_1)]=[w_1, S+ u_1]=-S(w_1),
\end{equation}
\begin{equation}\label{Qv1lem5}
\varphi([Q,v_1])=[\varphi(Q),\varphi(v_1)]=[T+u_3, S+u_1]=T(u_1)-S(u_3)=0.
\end{equation}
The last equation in (\ref{Qv1lem5}) holds because $T(u_1)=0$ (by (17)) and $S(u_3)=0$, since $u_3\in W_1$ and $[S, W_1]=0.$
By $\varphi ([Q, v_1])=0$,  $[Q, v_1]=0.$
Let us show that $P(v_1)\in V_{1}.$
Indeed, by (\ref{Pv1lem5}) $\varphi(P(v_1))=-S(w_1)\in W$. Moreover, $P(v_1)\in V$ and thus $\varphi(P(v_1))\in \varphi (V)$. Then $\varphi (P(v_1))\in W_1$ and $P(v_1)\in \varphi^{-1}(W_1)=V_1$.
Define the linear map $\psi: V\to W$ by  $\psi(v)=\varphi(v)$ for $v\in V_1$ and $\psi(v_1)=-w_1$. Then the restriction of $\psi$ on $V_1$ is an isomorphism of $\mathbb K[x, y]$ modules $V_1$ and $W_1.$ Using (\ref{Pv1lem5}) and (\ref{Qv1lem5}), we find that $\psi$ is an isomorphism of $\mathbb K[x,y]$-modules $V$ and $W.$
\end{proof}

\section{The main theorems}

\begin{theorem}\label{structure}
Let $V$ and $W$ be $\mathbb K[x, y]$-modules that are finite dimensional over $\mathbb K$. Let  $L_V=\mathbb K\langle P,Q \rangle \rightthreetimes V$ and $ L_W=\mathbb K\langle S,T \rangle \rightthreetimes W$ be  their associated Lie algebras. Let $L_V$ and $L_W$  be isomorphic. Then one of the following conditions holds:
\begin{itemize}
\item[(i)] $V$ and $W$ are weakly isomorphic $\mathbb K[x,y]$-modules;
\item[(ii)] $V=V_0\oplus V_2$  and $W=W_0\oplus W_2$, where $V_0,\mbox{ }W_0$ are not weakly isomorphic submodules  such that $\dim _{\mathbb K}V_0=\dim _{\mathbb K}W_0 \leq 6$, and where $V_2$ and $W_2$ are submodules of equal dimension such that $\mathbb K[x,y]$ acts trivially on them.
\end{itemize}
\end{theorem}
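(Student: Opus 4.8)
The plan is to argue by a dichotomy on whether $V$ and $W$ are weakly isomorphic. If they are, conclusion (i) holds, so assume from now on that $V$ and $W$ are \emph{not} weakly isomorphic and show that (ii) must hold. Fix the given isomorphism $\varphi\colon L_V\to L_W$ and set
\[
a=\dim_{\mathbb K}(\varphi(V)+W)/W,\qquad b=\dim_{\mathbb K}(\varphi(\mathbb K\langle P,Q\rangle)+W)/W,
\]
both lying in $\{0,1,2\}$. Because $\varphi(\mathbb K\langle P,Q\rangle)+\varphi(V)=\varphi(L_V)=L_W$, the images of $\mathbb K\langle P,Q\rangle$ and of $V$ span the two-dimensional space $L_W/W$, so $a+b\ge 2$.

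The first step is to force $a=2$. If $b=2$ then $\varphi(\mathbb K\langle P,Q\rangle)+W=L_W$, and Lemma \ref{dim2} makes $V$ and $W$ weakly isomorphic, against our assumption; hence $b\le 1$. If $b=1$ and $a=1$, then the two quotient dimensions in Lemma \ref{dim1} both equal $1$, and that lemma again yields a weak isomorphism. Together with $a+b\ge 2$ and $a\le 2$, these exclusions leave only $a=2$, that is, $\varphi(V)+W=L_W$.

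The second step is a structural decomposition that needs only the equality $\varphi(V)+W=L_W$. I would reuse the argument of Lemma \ref{dim6}, observing that there the hypothesis $\varphi(\mathbb K\langle P,Q\rangle)\subseteq W$ is used \emph{only} to deduce $\varphi(V)+W=L_W$; once that equality holds, the remainder of the argument applies unchanged. Putting $W_1=\varphi(V)\cap W$, $V_1=\varphi^{-1}(W_1)$ and choosing $v_1,v_2\in V$ with $\varphi(v_1)=S+u_1$, $\varphi(v_2)=T+u_2$, the relations $[v_i,V_1]=0$ give $[S,W_1]=[T,W_1]=0$, so $W_1\subseteq Z(L_W)$ and $V_1\subseteq Z(L_V)$. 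Centrality of $V_1$ forces every bracket in $L_V$ to be a combination of $[P,v_1],[P,v_2],[Q,v_1],[Q,v_2]$, whence $\dim_{\mathbb K}L_V'\le 4$; moreover a short computation with those brackets gives $\varphi(P(v_i)),\varphi(Q(v_i))\in W_1$, so $P(v_i),Q(v_i)\in V_1$ and $V_0:=\mathbb K\langle v_1,v_2\rangle+L_V'$ is a submodule with $\dim_{\mathbb K}V_0=2+\dim_{\mathbb K}L_V'\le 6$; a complement $V_2$ of $V_0\cap V_1$ in $V_1$ then gives $V=V_0\oplus V_2$ with $V_2$ trivial. Applying the same fact to $\varphi^{-1}\colon L_W\to L_V$, whose corresponding hypothesis $\varphi^{-1}(W)+V=L_V$ is just $\varphi(V)+W=L_W$ read through $\varphi^{-1}$, yields $W=W_0\oplus W_2$ with $W_0$ a submodule, $\dim_{\mathbb K}W_0=2+\dim_{\mathbb K}L_W'\le 6$, and $W_2$ trivial. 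Since $\dim_{\mathbb K}L_V'=\dim_{\mathbb K}L_W'$ we obtain $\dim_{\mathbb K}V_0=\dim_{\mathbb K}W_0\le 6$ and $\dim_{\mathbb K}V_2=\dim_{\mathbb K}W_2$, so $V_2\simeq W_2$ (trivial modules of equal dimension); and $V_0,W_0$ cannot be weakly isomorphic, for since twisting leaves a trivial module unchanged, a weak isomorphism $V_0\to W_0$ would extend to one of $V=V_0\oplus V_2$ and $W=W_0\oplus W_2$, contradicting our assumption. This is precisely (ii).

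The step I expect to be the real obstacle is the case $b=1$, $a=2$, which is not literally covered by Lemmas \ref{dim2}, \ref{dim1}, \ref{dim6} (the last assumes $b=0$). The key point making it work is that the decomposition in the third paragraph depends only on $\varphi(V)+W=L_W$ and not at all on how $\varphi(\mathbb K\langle P,Q\rangle)$ sits relative to $W$. The second delicate point is obtaining the matching decomposition of $W$: when $b=1$ no operator maps into $W$ modulo $W_1$, so the naive mirror of Lemma \ref{dim6} on the $W$-side breaks down, and I instead exploit the symmetry $\varphi\leftrightarrow\varphi^{-1}$ together with the invariance $\dim_{\mathbb K}L_V'=\dim_{\mathbb K}L_W'$.
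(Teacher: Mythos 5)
Your proof is correct, and for the decisive case it takes a genuinely different route from the paper's. The preliminary reduction coincides: both arguments dispose of $b=2$ via Lemma~\ref{dim2}, of $a=b=1$ via Lemma~\ref{dim1}, and of $b=0$ via Lemma~\ref{dim6}, leaving the case $a=2$, $b=1$. There the paper normalizes $\varphi(Q)=w_3\in W$ and $\varphi(P)=S+u_3$, redoes all the bracket computations from scratch, and exhibits explicit generating sets $V_0=\mathbb K\langle v_1,v_2,P(v_1),P(v_2),Q(v_2)\rangle$ and $W_0=\mathbb K\langle u_1-u_3,w_3,S(u_1-u_3),S(w_3),T(u_1-u_3)\rangle$, thereby even obtaining $\dim_{\mathbb K}V_0\le 5$ in that case. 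You instead isolate a symmetric decomposition statement: from $\varphi(V)+W=L_W$ alone one gets $W_1\subseteq Z(L_W)$, hence $V_1\subseteq Z(L_V)$, $L_V'\subseteq V_1$ with $\dim_{\mathbb K}L_V'\le 4$, and $V=\bigl(\mathbb K\langle v_1,v_2\rangle\oplus L_V'\bigr)\oplus V_2$; the matching decomposition of $W$ is then produced by applying the same statement to $\varphi^{-1}$, since $\varphi^{-1}(W)+V=L_V$ is exactly $\varphi(V)+W=L_W$ transported by $\varphi^{-1}$. This correctly sidesteps the real obstacle you identify --- for $b=1$ no basis of $\varphi(\mathbb K\langle P,Q\rangle)$ lies in $W$, which is what the $W$-side of Lemma~\ref{dim6} relies on --- and the equality $\dim_{\mathbb K}V_0=2+\dim_{\mathbb K}L_V'=2+\dim_{\mathbb K}L_W'=\dim_{\mathbb K}W_0$ gives a cleaner dimension match than the paper's explicit spanning sets. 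What your route buys is a single uniform argument covering $b\in\{0,1\}$ and a tidier organization of the case analysis through $a+b\ge 2$; what it gives up is only the sharper bound $\le 5$ in the $b=1$ case, which the theorem does not claim. Your explicit remark that a weak isomorphism $V_0\to W_0$ would propagate to one of $V$ and $W$ because twisting fixes trivial summands is also slightly more careful than the paper's one-line assertion of the same fact.
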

\begin{proof}
Let $\varphi: L_V \to L_W$ be an isomorphism of Lie algebras. If $\varphi(V)=W$ or $\varphi(\mathbb K\langle P,Q\rangle)+W=L_W$, then $V$ and $W$ are weakly isomorphic modules by Remark~\ref{remark} and Lemma~\ref{dim2}. If $\dim_{\mathbb K}\varphi(V)+W/W=1$ and $\dim_{\mathbb K}\varphi(\mathbb K \langle P,Q\rangle) +W/W=1$, then the modules $V$ and $W$ are weakly isomorphic  by Lemma~\ref{dim1}. If $\varphi(\mathbb K \langle P,Q\rangle)\subseteq W$ and the modules $V$ and $W$ are not weakly isomorphic, then $V$ and $W$ are of type (ii) of the theorem by Lemma (\ref{dim6}).

Thus, we can assume that $\dim_{\mathbb K}(\varphi(\mathbb K \langle P,Q\rangle) +W)/W=1$  and $\varphi(V)+W=L_W$. By Lemma \ref{dimensions}, $W_1=\varphi(V)\cap W$ is a submodule of codimension $2$  in $W$ and $V_1=\varphi^{-1}(W_1)$ is a submodule of codimension $2$ in $V.$  Choose $v_1,v_2\in V\setminus V_1$ such that $V=\mathbb K\langle v_1,v_2 \rangle +V_1$ and
\begin{equation}\label{v1,v2th}
\varphi(v_1)=S+u_1, \qquad  \varphi(v_2)=T+u_2
\end{equation}
for some $u_1, u_2\in W$ (which is possible since $v_1$ and $v_2$ are linearly independent over $V_1$, and $\varphi(V)+W=L_W$). As in the proof of Lemma \ref{dim6}, we find that $[P,V_1]=[Q,V_1]=0$ and $[S,W_1]=[T,W_1]=0$.

Since $\dim _{\mathbb K}\varphi(\mathbb K \langle P,Q\rangle) +W/W=1$, we get $\varphi(\mathbb K\langle P,Q\rangle)\cap W\neq 0$. Take a nonzero  $w_3\in \varphi(\mathbb K\langle P,Q\rangle)\cap W$ and write $\varphi(\alpha P+\beta Q)=w_3$ for some $\alpha, \beta\in \mathbb K$. We can assume that $\varphi(Q)=w_3$ (passing to a weakly isomorphic module $V_{\theta}$ if it is needed). Moreover, $\varphi(\mathbb K\langle P,Q\rangle)\not\subseteq W$, so we assume that $\varphi(P)=S+u_3$ for some $u_3\in W$ (passing to a weakly isomorphic module $W_{\sigma}$ if it is needed).

Since $[P,Q]=0$ and $[v_1,v_2]=0$, we get $$[\varphi(P),\varphi(Q)]=[S+u_3,w_3]=S(w_3)=0$$ and
$$[\varphi(v_1),\varphi(v_2)]=[S+u_1,T+u_2]=S(u_2)-T(u_1)=0.$$ As a consequence,
\begin{equation}\label{SST}
S(w_3)=0, \qquad S(u_2)=T(u_1).
\end{equation}
Similarly,
\begin{equation}\label{Pv1th}
\varphi([P,v_1])=[\varphi(P),\varphi(v_1)]=[S+u_3, S+u_1]=S(u_1-u_3),
\end{equation}
\begin{equation}\label{Pv2th}
\varphi([P,v_2])=[S+u_3, T+u_2]=S(u_2)-T(u_3)=-T(u_1-u_3),
\end{equation}
\begin{equation}\label{Qv1th}
\varphi([Q,v_1])=[\varphi(Q),\varphi(v_1)]=[w_3, S+u_1]=-S(w_3)=0,
\end{equation}
\begin{equation}\label{Qv2th}
\varphi([Q,v_2])=[\varphi(Q),\varphi(v_2)]=[w_3, T+u_2]=-T(w_3)
\end{equation}
(we use (\ref{SST}) in (\ref{Pv2th}) and (\ref{Qv1th})).

Since  $\varphi$ is an isomorphism of the Lie algebras $L_V$ and $L_W$, and $\varphi ([Q, v_1])=0$ by (\ref{Qv1th}), we get $[Q, v_1]=0.$
Observe that $\mathbb K\langle u_1-u_3, w_3\rangle +W_1=W$. Indeed, it is easy to see that $u_1-u_3=\varphi(v_1-P)$. Since $v_1-P$ and $Q$ are linear independent in $L_V$ over $V_1$, their images $u_1-u_3$ and $w_3$ under $\varphi$ are linearly independent over $W_1$ in $W$.

Write $V_0:=\mathbb K\langle v_1,v_2,P(v_1),P(v_2),Q(v_2)\rangle$. It is easy to show that  $V_0$ is a submodule of the $\mathbb K[x,y]$-module $V$ and $\dim _{\mathbb K}V_0\leq 5$. Take any $\mathbb K$-submodule $V_2\subset V_1$ such  that  $V_1=(V_0\cap V_1)\oplus V_2$ (this submodule exists because $P$ and $Q$ act trivially on $V_1$).  Then $V=V_0\oplus V_2$ is a direct sum of $\mathbb K[x, y]$-submodules. Similarly, let us consider the  submodule $W_0:=\mathbb K\langle u_1-u_3,w_3, S(u_1-u_3), S(w_3), T(u_1-u_3)\rangle$ of $W$ and take a submodule $W_2$ of $W_1$ such that $W_{1}=(W_0\cap W_1)\oplus W_2$ (it exists since $T$ and  $S$ act trivially on $W_1$).
  Then $W=W_0\oplus W_2$ is a direct sum of submodules, and $\mathbb K[x,y]$ acts trivially on  $ W_2.$ Furthermore, $\dim _{\mathbb K} V_0=\dim _{\mathbb K}W_0\leq 5$ and $\dim _{\mathbb K}V_2=\dim _{\mathbb K}W_2$. Thus, $V_2$ and $W_2$ are isomorphic $\mathbb K[x,y]$-modules. If $V$ and $W$ are not weakly isomorphic, then the $\mathbb K[x,y]$-modules $V_0$ and $W_0$ are also not weakly isomorphic. We see that $V$ and $W$ are of type (ii) of the theorem.
\end{proof}
The following theorem is a direct consequence of the previous results.
\begin{theorem}\label{indecomposable}
Let $V$ and $ W$ be indecomposable modules over the ring $\mathbb K[x, y]$. Let $L_{V}=\mathbb K\langle P, Q\rangle \rightthreetimes V $ and $  L_{W}=\mathbb K\langle S, T\rangle \rightthreetimes W$ be their associated Lie algebras. Let $\dim _{\mathbb K}V=\dim _{\mathbb K}W\geq 7.$ Then the $\mathbb K[x, y]$-modules $V$ and $W$ are weakly isomorphic if and only if the Lie algebras $L_V$ and $ L_W$ are isomorphic.

\end{theorem}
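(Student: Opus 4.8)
The plan is to obtain this theorem by combining Proposition~\ref{propos} with Theorem~\ref{structure}, and then using the two hypotheses---indecomposability and $\dim_{\mathbb K}V=\dim_{\mathbb K}W\geq 7$---to rule out the ``bad'' alternative produced by Theorem~\ref{structure}.

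First I would dispose of the forward implication, which requires nothing new. If the $\mathbb K[x,y]$-modules $V$ and $W$ are weakly isomorphic, then Proposition~\ref{propos} already gives that the associated Lie algebras $L_V$ and $L_W$ are isomorphic; neither indecomposability nor the dimension bound is used here.

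For the converse, suppose $L_V\cong L_W$. By Theorem~\ref{structure}, one of two alternatives holds. Alternative~(i) states that $V$ and $W$ are weakly isomorphic, which is exactly the desired conclusion, so it remains only to show that alternative~(ii) is incompatible with the present hypotheses. In that alternative we are handed decompositions $V=V_0\oplus V_2$ and $W=W_0\oplus W_2$ into $\mathbb K[x,y]$-submodules, with $\dim_{\mathbb K}V_0=\dim_{\mathbb K}W_0\leq 6$ and with $\mathbb K[x,y]$ acting trivially on $V_2$ and $W_2$. Since $V$ is indecomposable, one of the summands $V_0,V_2$ must be zero. If $V_2=0$, then $V=V_0$, whence $\dim_{\mathbb K}V\leq 6$, contradicting $\dim_{\mathbb K}V\geq 7$. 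If instead $V_0=0$, then $V=V_2$ is a module on which $\mathbb K[x,y]$ acts trivially; but then every subspace is a submodule, so $V$ splits as a direct sum of $\dim_{\mathbb K}V\geq 7$ one-dimensional trivial modules and is therefore decomposable, again a contradiction. Hence alternative~(ii) cannot occur, and $V$ and $W$ must be weakly isomorphic.

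I do not expect any genuine obstacle: the theorem is assembled directly from the earlier results, the nontrivial work having been carried out in Lemmas~\ref{dim2}, \ref{dim6}, \ref{dim1} and Theorem~\ref{structure}. The only point deserving a moment's care is the observation that a module of dimension at least two carrying the trivial $\mathbb K[x,y]$-action is automatically decomposable; it is precisely this remark that forces the trivial summand to vanish and thereby collides with the dimension hypothesis, closing off the sole escape from weak isomorphism.
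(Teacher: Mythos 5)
Your proof is correct and follows exactly the route the paper intends: the paper gives no explicit argument beyond calling the theorem ``a direct consequence of the previous results,'' and your write-up supplies precisely the intended details (forward direction from Proposition~\ref{propos}, converse from Theorem~\ref{structure}, with indecomposability and $\dim\geq 7$ excluding alternative~(ii)). The observation that a trivial module of dimension $\geq 2$ is decomposable is the right way to close the remaining case.
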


\begin{corollary}
The problem of classifying finite dimensional  Lie algebras of the form $L=B\rightthreetimes A$  with an  abelian ideal $A$ and a two-dimensional abelian subalgebra $B$ is equivalent to the problem of classifying finite dimensional $\mathbb K[x, y]$-modules  up to weak isomorphism.
\end{corollary}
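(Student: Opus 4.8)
The plan is to exhibit the assignment $V\mapsto L_V$ as the reduction realizing the equivalence and to control precisely how far it fails to be a bijection, using Proposition~\ref{propos} and Theorem~\ref{structure}. The first step is to show the construction is surjective onto the class of Lie algebras in question: every $L=B\rightthreetimes A$ with abelian ideal $A$ and two-dimensional abelian subalgebra $B=\mathbb K\langle b_1,b_2\rangle$ is isomorphic to some $L_V$. Since $A$ is an ideal, the maps $P\colon a\mapsto[b_1,a]$ and $Q\colon a\mapsto[b_2,a]$ are linear operators on the vector space $A$, and they commute: for $a\in A$ the Jacobi identity gives $[b_1,[b_2,a]]-[b_2,[b_1,a]]=[[b_1,b_2],a]=0$ because $[b_1,b_2]=0$. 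Hence $A$ carries the structure of a $\mathbb K[x,y]$-module $V$ via $x\cdot a=P(a)$ and $y\cdot a=Q(a)$, and by the definition of the semidirect product $L=\mathbb K\langle b_1,b_2\rangle\rightthreetimes A$ coincides with $L_V$. Thus the isomorphism classes of the Lie algebras under consideration are exactly the classes of the algebras $L_V$ as $V$ runs over all finite-dimensional $\mathbb K[x,y]$-modules; changing the basis of $B$ replaces $P,Q$ by linear combinations, i.e.\ passes to a twist $V_\theta$, so the weak-isomorphism class attached to such a decomposition is well defined.

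By Proposition~\ref{propos} weakly isomorphic modules give isomorphic Lie algebras, so $V\mapsto L_V$ descends to a well-defined surjection $\Phi$ from the set of weak-isomorphism classes of modules onto the set of isomorphism classes of the Lie algebras $B\rightthreetimes A$. A complete system of weak-isomorphism invariants for modules therefore produces, through $\Phi$, a complete system of isomorphism invariants for these Lie algebras once the fibres of $\Phi$ are understood; conversely, a classification of the Lie algebras transports back to a classification of modules up to the ambiguity measured by those fibres.

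The fibres are described exactly by Theorem~\ref{structure}: $L_V\cong L_W$ forces either that $V$ and $W$ are weakly isomorphic, or that $V=V_0\oplus V_2$ and $W=W_0\oplus W_2$, with $\mathbb K[x,y]$ acting trivially on the isomorphic summands $V_2\cong W_2$ and with non-weakly-isomorphic exceptional summands $V_0,W_0$ of dimension at most $6$. Hence $\Phi$ is injective outside a region governed entirely by modules of dimension $\le 6$ together with trivial summands, and on that region every identification is given explicitly by the type~(ii) description; reading this in reverse, the isomorphism class of $L_V$ determines the weak-isomorphism class of $V$ up to this same explicitly bounded ambiguity. The main obstacle is precisely this type~(ii) exception: since $\Phi$ is genuinely not injective, the two problems are not in literal bijection, and the crux of the argument is to show the discrepancy is inessential, being confined to the bounded-dimensional exceptional summands of Theorem~\ref{structure} and resolved there explicitly, so that a solution of either classification problem carries over to the other (in particular the two problems share the same tame/wild type).
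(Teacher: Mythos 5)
Your argument is correct and follows essentially the same route the paper intends: the corollary is stated there as a direct consequence of Proposition~\ref{propos} and Theorem~\ref{structure}, exactly the two ingredients you use, and your explicit check that every $L=B\rightthreetimes A$ arises as $L_V$ (via the commuting adjoint operators of a basis of $B$, commutativity coming from the Jacobi identity and $[b_1,b_2]=0$) supplies the one surjectivity step the paper leaves implicit. One small caution: Theorem~\ref{structure} gives only a \emph{necessary} condition in the exceptional case, so the fibres of your map $\Phi$ over type~(ii) pairs are merely \emph{bounded} (confined to summands of dimension at most $6$ plus trivial summands) rather than ``given explicitly'' as you state; this does not affect the equivalence of the two classification problems, since the residual ambiguity is a finite-dimensional, in-principle classifiable matter, which is precisely the sense in which the paper asserts the equivalence.
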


The authors are grateful to V.V.Sergeichuk for useful discussions and advice.


\begin{thebibliography}{99}

\bibitem{BBLPS}
G. Belitskii, V.M. Bondarenko, R. Lipyanski, V.V. Plachotnik, V.V. Sergeichuk, The problems of
classifying pairs of forms and local algebras with zero cube radical are wild, Linear Algebra Appl.
402 (2005) 135–-142.

\bibitem{BLS} G.~Belitskii, R.~Lipyanski, V.~V.~Sergeichuk, Problems of classifying associative or Lie algebras and triples of symmetric or skew-symmetric matrices are wild, Linear Algebra Appl. 407 (2005) 249--262.

 \bibitem{GP}
I.~M.~Gelfand, V.~A.~Ponomarev,  Remarks on the classification of a pair of commuting linear transformations in a finite dimensional space, Functional Anal. Appl. 3 (1969) 325–-326.

 \bibitem{Qui}  D. Quillen, Projective modules over polynomial rings, Invent. Math. 36 (1976)
167--171.

\bibitem{Sus} A.~Suslin, Projective modules over polynomial rings are free, Soviet Math. Doklady 17 (1976) 1160–-1164.


\end{thebibliography}
\end{document}